\newtheorem{thm}{Theorem}[section]
\newtheorem{lem}[thm]{Lemma}
\theoremstyle{definition}
\newtheorem{defn}{Definition}[section]
\theoremstyle{remark}
\begin{document}

\title{A fixed point theorem for twist maps}
\author{Peizheng Yu, Zhihong Xia}
\address{Department of Mathematics, Southern University of Science and
  Technology, Shenzhen, China}
\address{Department of Mathematics, Northwestern University, Evanston,
  IL 60208 USA}
\email{11930514@mail.sustech.edu.cn, xia@math.northwestern.edu}
\date{version: June 11, 2021}
\maketitle

\begin{abstract}
  Poincar\'e's last geometric theorem (Poincar\'e-Birkhoff Theorem
  \cite{Birkhoff1913}) states that any area-preserving twist map of
  annulus has at least two fixed points. We replace the
  area-preserving condition with a weaker intersection property, which
  states that any essential simple closed curve intersects its image
  under $f$ at least at one point. The conclusion is that any such map
  has at least one fixed point. Besides providing a new proof to
  Poincar\'e's geometric theorem, our result also has some
  applications to reversible systems.
\end{abstract}

\section{Introduction}

In 1913, Birkhorff proved Poincar\'e's Last Geometric Theorem
\cite{Birkhoff1913} stating that an area-preserving homeomorphism of
an annulus satisfying the twist condition has at least two fixed
points. The proof has been subsequently improved by Barrar
\cite{Barrar1967}, Carter \cite{Carter1982} and many others. In this
note, we replace the area-preserving condition with a weaker
intersection property and obtain a weaker, but sharp, result. We show
that there is at least one fixed point under the intersection
condition. Combining with Slaminka's \cite{Slaminka1993} result on
removing isolated fixed point with zero index, our method provides
another proof of Poincar\'e's last geometric theorem for
area-preserving case. It also provides a new proof to Carter's
theorem~\cite{Carter1982}. One of the interesting applications of our
result is for the so-called reversible systems. It is interesting to
note that both the KAM theory and Aubry-Mather theory are applicable
to reversible systems (Siegel \& Moser \cite{SiegelMoser1971}, Chow \&
Pei \cite{ChowPei1995}). Here we also have a positive answer for
Poincar\'e's geometric theorem, albeit with just one fixed point.

Our proof is more of a simple standard dynamical systems approach,
using techniques of Franks \cite{Franks1988,Franks1992}, and Brouwer's
plane translation theorem (Brown \cite{Brown1984}). We analyse the
chain recurrent set, where  Conley's fundamental theorem for dynamical
systems \cite{Conley1978} provided a fine structure on the
chain recurrent set. The key observation is very simple: the intersection
property implies that two boundary components are in the same
chain transitive component of the chain recurrent set. The techniques
used in the proof are very much standard in dynamics.

More precisely, let $\mathbb{A} = S^1 \times [0,1]$ be
an annulus with boundaries $A_0 = S^1 \times \{0\}$ and
$A_1 = S^1 \times \{1\}$. Let $f:\mathbb{A}\rightarrow\mathbb{A}$ be a
homeomorphism satisfying the twist condition and intersection
property.

We say that $f$ satisfies the {\it intersection property}\/ if any
essential simple closed curve intersects its image under $f$ at least
at one point. A simple closed curve is said to be essential if it is
not contractible. As for the twist condition, we need to consider the
covering space $\widetilde{\mathbb{A}} = \mathbb R \times [0,1]$ of
$\mathbb{A}$, and the lift of $f$ denoted by
$\widetilde{f}:\widetilde{\mathbb{A}}\rightarrow\widetilde{\mathbb{A}}$. We
say that $f$ satisfies the {\it twist condition}\/ provided that
$\widetilde{f}$ moves the two boundary components of
$\widetilde{\mathbb{A}}$ in opposite directions. We may assume that
$\widetilde{f}(x,y) = (x+r_1(x),y)$ for $y = 1$ and
$\widetilde{f}(x,y) = (x-r_0(x),y)$ for $y = 0$, where
$r_0(x), r_1(x) >0$.  Our main result is following theorem.

\begin{thm}
  If $f:\mathbb{A}\rightarrow\mathbb{A}$ is a homeomorphism satisfying
  twist condition and intersection property. Then $f$ has at least one
  fixed point.
\end{thm}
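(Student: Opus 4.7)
The plan is to argue by contradiction: assume $f$ has no fixed points, lift to $\widetilde{f}:\widetilde{\mathbb{A}}\to\widetilde{\mathbb{A}}$, and combine Conley's fundamental theorem on the chain recurrent set with Brouwer's plane translation theorem (in the equivariant form developed by Franks) to derive a contradiction.

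First I would show that the two boundary components $A_0$ and $A_1$ lie in the same chain transitive component of the chain recurrent set $\mathcal{R}(f)$. If they did not, Conley's decomposition would supply an attractor--repeller pair separating $A_0$ from $A_1$, and hence an open trapping region $U$ with $A_0\subset U$, $\overline{f(U)}\subset U$, and $A_1\cap U=\varnothing$. Since $U$ separates the two boundary circles, some connected component $\gamma$ of $\partial U$ is an essential simple closed curve in $\mathbb{A}$, and $\gamma\cap f(\gamma)=\varnothing$, directly contradicting the intersection property. Thus $A_0$ and $A_1$ belong to a common chain transitive component $\mathcal{C}$.

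Next I would exploit the twist condition. The lift $\widetilde{f}$ translates $A_0$ uniformly to the left and $A_1$ uniformly to the right; in particular the chain recurrent points lying on $A_0$ (resp.\ $A_1$) have negative (resp.\ positive) horizontal displacement under $\widetilde{f}$. Chain transitivity of $\mathcal{C}$ then produces, for every $\varepsilon>0$, an $\varepsilon$-pseudo-orbit of $\widetilde{f}$ that closes up with zero net horizontal displacement, i.e.\ an arbitrarily fine pseudo-periodic orbit of $\widetilde{f}$.

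Finally I would invoke the Franks-type argument of \cite{Franks1988,Franks1992} on the universal cover. Extending $\widetilde{f}$ to an orientation-preserving homeomorphism $F$ of the plane (for instance by reflecting across $A_0$ and $A_1$), if $f$ (and hence $F$) had no fixed points, Brouwer's plane translation theorem (cf.\ \cite{Brown1984}) would force $F$ to have no non-wandering orbits at all, and in particular every chain recurrent orbit of $\widetilde{f}$ would drift in a definite horizontal direction — incompatible with the pseudo-periodic orbit produced in the previous step. Consequently $\widetilde{f}$ must admit a fixed point, which descends to the required fixed point of $f$. The main obstacle I anticipate is the first step: extracting from the abstract Conley decomposition a genuinely \emph{essential} separating simple closed curve, so that the intersection property can legitimately be applied. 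A secondary subtlety is arranging the planar extension $F$ so that the twist behaviour on $A_0$ and $A_1$ is correctly converted into the drift condition obstructing Brouwer translation.
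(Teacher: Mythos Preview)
Your outline matches the paper's proof almost exactly: Conley theory to put $A_0$ and $A_1$ in one chain transitive class (the paper uses a complete Lyapunov function and selects an essential curve inside a level band $g^{-1}((c-\epsilon,c))$, though it remarks that the attractor--repeller route also works), then the twist to close up an $\varepsilon$-chain in the cover, then a Franks periodic-disk-chain argument on an extension to $\mathbb{R}^2$. The two subtleties you flag are resolved in the paper by (i) first reducing to rigid boundary rotations via an explicit extension $f_\delta$ to a slightly larger annulus on which the intersection property is re-verified, and then extending $\widetilde f$ to $\mathbb{R}^2$ by constant horizontal translation (not reflection), and (ii) converting the periodic $\varepsilon$-chain into a genuine periodic disk chain of $\varepsilon$-balls with $\varepsilon<\delta/4$, so that the Brouwer--Franks theorem applies directly rather than via a non-wandering argument.
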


Note that the intersection property is weaker than the area-preserving
property, and Carter gives an example (Figure 1) in \cite{Carter1982} that
such $f$ may have exactly one fixed point. So, our weaker conclusion,
$f$ has at least one fixed point instead of two, is actually sharp.

\begin{figure}[htbp]
	\centering
	\includegraphics{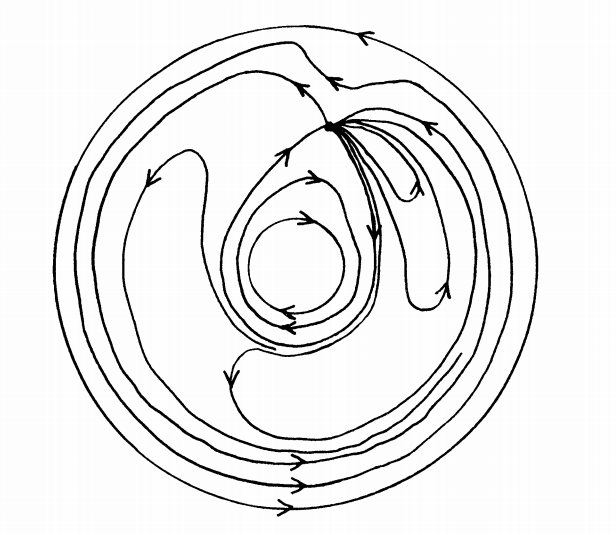}
	\caption{Carter's example}
	\label{}
\end{figure}

There are many different proofs and extensions of Poincar\'e's
geometric theorem. Some proofs are less clear and convincing than
others. Our statement of theorem shares some common features with the
works of Birkhoff \cite{Birkhoff1926} and Carter
\cite{Carter1982}. Carter obtained two fixed points, using a stronger
intersection property.

An outline of this paper is as follows: In section 2, we recall some
properties of chain recurrent set and introduce Conley's fundamental
theorem of dynamical systems and the Brouwer planar translation
theorem. In section 3, we will first assume $f$ is rigid rotation on
boundaries i.e. $r_0(x)$ and $r_1(x)$ are constant and prove our main
theorem, then we will show for the general case, the theorem also
holds true. In section 4, we show how our result applies to
reversible systems.

\section{Chain recurrent set and useful theorems}

Chain recurrence is an important concept in dynamical systems. Given a
compact metric space $X$ and homeomorphism $f: X \rightarrow X$. We
have the following definitions.

\begin{defn}Given two points $p,q \in X$, an {\em $\epsilon$-chain}\/ from
  $p$ to $q$ is a sequence $x_1, x_2, ..., x_n$, where $x_1 = p$,
  $x_n = q$ and for all $1 \leq i \leq n-1$,
  $d(f(x_{i}),x_{i+1}) < \epsilon$.
\end{defn}

\begin{defn} A point $p \in X$ is {\em chain recurrent}\/ if for all
  $\epsilon >0$, there is an $\epsilon$-chain from $p$ to itself. The
  set of all chain recurrent points in X is called the $chain$
  {\em recurrent set}\/, denoted by $\mathcal{R}(f)$.
\end{defn}

Now, we can define an equivalence relation on $\mathcal{R}(f)$ as
follows: For $p,q \in \mathcal{R}(f)$, $p \sim q$ if and only if for
all $\epsilon >0$, there are $\epsilon$-chains from $p$ to $q$ and
from $q$ to $p$. And it's easy to see that $\sim$ is reflexive,
symmetric and transitive.

\begin{defn} The equivalence classes in $\mathcal{R}(f)$ for $\sim$ is
  called $chain$ $transitive$ $components$. A set is called {\em chain
  transitive}\/ if any two points $p$, $q$ in this set, we have
  $p \sim q$.
\end{defn}

The next lemma allows us to relate chain transitive components with
connected components.

\begin{lem} \label{lem2.1}
  The connected components of $\mathcal{R}(f)$ are chain
  transitive.
\end{lem}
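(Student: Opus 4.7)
The plan is to fix $\epsilon > 0$ and refine the equivalence relation $\sim$ to an $\epsilon$-dependent version $\sim_\epsilon$, then exploit continuity of $f$ to show its classes are open in $\mathcal{R}(f)$, and finally invoke connectedness of $C$.

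More concretely, let $C$ be a connected component of $\mathcal{R}(f)$ and fix $\epsilon > 0$. For $p, q \in \mathcal{R}(f)$, write $p \sim_\epsilon q$ if there exist $\epsilon$-chains from $p$ to $q$ and from $q$ to $p$. Reflexivity uses that each $p \in \mathcal{R}(f)$ admits an $\epsilon$-chain from $p$ to itself; symmetry is immediate; transitivity follows by concatenation of chains. So $\sim_\epsilon$ partitions $\mathcal{R}(f)$ into equivalence classes, and clearly $p \sim q$ iff $p \sim_\epsilon q$ for every $\epsilon > 0$.

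The key step is to show each $\sim_\epsilon$-class is open in $\mathcal{R}(f)$. Suppose $p \sim_\epsilon q$ via a chain $p = x_1, x_2, \ldots, x_n = q$ with $d(f(x_i), x_{i+1}) < \epsilon$, and a return chain $q = y_1, \ldots, y_m = p$ with $d(f(y_j), y_{j+1}) < \epsilon$. Each inequality has strict slack, so by continuity of $f$ there is a neighborhood $U$ of $q$ in $X$ such that for every $q' \in U$ we have $d(f(x_{n-1}), q') < \epsilon$ (modifying only the last step of the forward chain) and $d(f(q'), y_2) < \epsilon$ (modifying only the first step of the return chain). Then $p \sim_\epsilon q'$ for all $q' \in U \cap \mathcal{R}(f)$, proving openness.

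Now $\mathcal{R}(f)$ is a disjoint union of relatively open $\sim_\epsilon$-classes, so any connected subset lies in a single class; in particular, any two points $p, q \in C$ satisfy $p \sim_\epsilon q$. Since $\epsilon > 0$ was arbitrary, $p \sim q$, and $C$ is chain transitive. The only real subtlety is the openness argument, which hinges on having strict (not just non-strict) inequalities $d(f(x_i), x_{i+1}) < \epsilon$ in the definition of an $\epsilon$-chain—this built-in slack is exactly what lets continuity absorb small perturbations of the endpoints.
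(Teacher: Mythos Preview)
Your proof is correct and takes a genuinely different route from the paper's. The paper argues constructively: given $\epsilon>0$, it covers the connected component $K$ by finitely many $\tfrac{\epsilon}{4}$-balls (using that $K$ is closed in the compact space $X$, hence compact), links any two points $x,y\in K$ by a finite sequence $x=x_0,\dots,x_n=y$ with consecutive points sharing a ball, and then turns each hop $x_i\to x_{i+1}$ into an $\epsilon$-chain by taking an $\tfrac{\epsilon}{2}$-chain from $x_i$ back to itself and swapping its final point for $x_{i+1}$. Your approach is topological rather than constructive: you show the $\sim_\epsilon$-classes are relatively open in $\mathcal{R}(f)$ and let connectedness do the work. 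Your argument never invokes compactness and is arguably cleaner; the paper's version is more hands-on and produces the chain explicitly. One small imprecision worth noting: the inequality $d(f(x_{n-1}),q')<\epsilon$ only needs continuity of the metric in its second argument, not continuity of $f$; it is the return-chain inequality $d(f(q'),y_2)<\epsilon$ that genuinely requires $f$ to be continuous.
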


\begin{proof}
  Let $K$ be a connected component of $\mathcal{R}(f)$. Given any
  $\epsilon >0$, we can cover $K$ by disks with radius
  $\frac{\epsilon}{4}$. Since $K$ is closed and hence compact, there
  exists a finite subcover $\{B_i\}_{i=1}^m$. Then for any
  $x,y \in K$, we can find a sequence
  $\{x=x_0,x_1,...,x_{n-1},x_n=y\}$ in $\mathcal{R}(f)$ such that
  $x_i$ and $x_{i+1}$ lies in a same disk for $i=0,1,...,n-1$. Note
  each $x_i \in \mathcal{R}(f)$, there is a $\frac{\epsilon}{2}$-chain
  from $x_i$ to $x_i$. We replace the last point by $x_{i+1}$, then we
  get an $\epsilon$-chain form $x_i$ to $x_{i+1}$. And finally we get
  an $\epsilon$-chain form $x$ to $y$. Since $x$ and $y$ are
  arbitrary, $K$ is chain transitive.
\end{proof}

\

In order to state the fundamental theorem of dynamical systems, we
need to give this definition.

\begin{defn} $g: X \rightarrow \mathbb{R}$ is a {\em complete Lyapunov
    function}\/ for $f$ if:

  \begin{enumerate}
  \item $\forall p \notin \mathcal{R}(f)$, $g(f(p)) < g(p)$.
    
 \item  $\forall p,q \in \mathcal{R}(f)$, $g(p) = g(q)$ if and only if
   $p \sim q$.
   
  \item $g(\mathcal{R}(f))$ is compact and nowhere dense in $\mathbb R$.
  \end{enumerate}
\end{defn}

\begin{thm}[Conley's \cite{Conley1978} fundamental theorem of
  dynamical systems] Complete Lyapunov function exists for any
  homeomorphism on compact metric spaces.
\end{thm}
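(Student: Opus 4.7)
The plan is to follow Conley's original approach: build $g$ as a weighted sum, indexed by a countable family of attractor-repeller pairs, of elementary Lyapunov functions each adapted to one such pair.

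First I would set up the pieces. An \emph{attractor} $A$ is a compact invariant set admitting an open neighborhood $U$ with $\overline{f(U)} \subset U$ and $A = \bigcap_{n\ge 0} f^n(U)$; its \emph{dual repeller} is $A^* = X \setminus \bigcup_{n\ge 0} f^{-n}(U)$. The central structural fact to establish is
\[
  \mathcal{R}(f) \;=\; \bigcap_{(A,A^*)} (A \cup A^*),
\]
the intersection ranging over all attractor-repeller pairs. The forward inclusion uses that a point outside $A \cup A^*$ is eventually trapped in $U$, so for small enough $\epsilon$ no $\epsilon$-chain can return it to itself. For the reverse inclusion, given $p \not\sim q$ in $\mathcal{R}(f)$, one builds a separating attractor out of the closure of the set of points reachable by arbitrarily fine $\epsilon$-chains from $p$.

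Next, for each pair $(A,A^*)$ I would construct a continuous $\ell_A : X \to [0,1]$ with $\ell_A^{-1}(0) = A$, $\ell_A^{-1}(1) = A^*$, and $\ell_A(f(x)) < \ell_A(x)$ whenever $x \notin A \cup A^*$. A distance ratio such as $d(x,A)/(d(x,A) + d(x,A^*))$ supplies continuity and the boundary values, and orbit averaging (replacing it by a sum of $2^{-n}$-weighted translates along the forward $f$-orbit) promotes weak to strict monotonicity off $A\cup A^*$. The countability of the attractor family is obtained by observing that each attractor is pinned down by a trapping neighborhood, which may be selected from a countable base of the topology of the compact metric space $X$; this yields an enumeration $\{(A_n, A_n^*)\}_{n\ge 1}$.

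Finally I would set
\[
  g(x) \;=\; \sum_{n=1}^{\infty} 3^{-n}\, \ell_{A_n}(x).
\]
Uniform convergence makes $g$ continuous. Conditions (1) and (2) of the definition follow term-by-term from the properties of the $\ell_{A_n}$, combined with the attractor-repeller characterizations of $\mathcal{R}(f)$ and of the equivalence $\sim$. The step I expect to be the main obstacle is condition (3): $g(\mathcal{R}(f))$ must be compact and nowhere dense. Compactness is immediate from continuity of $g$ and compactness of $\mathcal{R}(f)$. Nowhere density is precisely why the weighting $3^{-n}$ is used: on $\mathcal{R}(f)$ every $\ell_{A_n}$ takes only the values $0$ and $1$, so $g(\mathcal{R}(f))$ is contained in the Cantor-type set $\{\sum_{n\ge 1} 3^{-n}\varepsilon_n : \varepsilon_n \in \{0,1\}\}$, which is compact and nowhere dense in $\mathbb{R}$.
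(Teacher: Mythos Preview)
The paper does not prove this theorem at all: it is quoted as a background result from Conley, with a pointer to Franks's notes for details, and is then used as a black box in the proof of Lemma~3.1. So there is no ``paper's proof'' to compare against.

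That said, your outline is the standard Conley construction and is correct in its architecture: the characterization $\mathcal{R}(f) = \bigcap (A\cup A^*)$, the elementary Lyapunov function for a single attractor--repeller pair, countability of attractors via a countable base, and the $3^{-n}$ weighting to force $g(\mathcal{R}(f))$ into a Cantor-type set. Two places deserve a bit more care when you write it out. First, the countability step: a single basic open set need not be a trapping neighborhood, so the argument really goes through \emph{finite unions} of basic open sets, of which there are still only countably many. Second, for property~(2) you need the companion fact that $p\sim q$ in $\mathcal{R}(f)$ if and only if every attractor either contains both or neither; this is what makes the ternary expansion of $g(p)$ and $g(q)$ agree digit by digit. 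Your orbit-averaging description for strict monotonicity is a little telegraphic (one usually first passes to a non-increasing modification before summing the $2^{-n}$-weighted translates), but the intent is right.
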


This theorem states that we can get a complete Lyapunov
function that stays constant only on the chain transitive components
and strictly decreases along any orbit not in $\mathcal{R}(f)$. More
details of this theorem can be found in Franks \cite{Franks2017}.

The next concept and theorem are introduced in
\cite{Franks1988}. It is very useful for showing the existence of
a fixed point on plane. And it is easy to see from the definition that
$\epsilon$-chain and disk chain have a closed relationship.

\begin{defn}Let $f:M \rightarrow M$ be a homeomorphism of a surface. A
  {\em disk chain}\/ for $f$ is a finite set $U_1, U_2,...,U_n$ of
  embedded open disks in $M$ satisfying

  \begin{enumerate}
  \item $f\left(U_{i}\right) \cap U_{i} = \varnothing$ for $1 \leq i \leq n$.
  \item If $i \neq j$, then either $U_{i}=U_{j}$ or
  $U_{i} \cap U_{j}=\varnothing$.
  \item For $1 \leq i \leq n$, there exists $m_{i}>0$ with
  $f^{m_{i}}\left(U_{i}\right) \cap U_{i+1} \neq \varnothing$.
    \end{enumerate}

    If $U_1 = U_n$, we will say that $U_1, U_2,...,U_n$ is a $periodic$
  $disk$ $chain$.
\end{defn}

\begin{thm}[Brouwer planar translation theorem] Let
  $f: \mathbb{R}^2 \rightarrow \mathbb{R}^2$ be an orientation
  preserving homeomorphism which possesses a periodic disk chain. Then
  there is a simple closed curve $\gamma$ in $\mathbb{R}^2$ such that
  $I(\gamma,f)=1$. If $f$ has only isolated fixed points, then $f$ has
  a fixed point of positive index.
\end{thm}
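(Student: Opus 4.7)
The plan is to prove this theorem in two stages: first use the periodic disk chain $U_1, \ldots, U_n = U_1$ to build a simple closed curve $\gamma$ with $I(\gamma, f) = 1$, and then extract a fixed point of positive index by an index count. The indispensable tool is the \emph{classical} Brouwer plane translation theorem: any orientation-preserving, fixed-point-free homeomorphism of $\mathbb{R}^2$ admits, through every point, a Brouwer line, i.e.\ a properly embedded arc $\ell$ with $f^n(\ell) \cap \ell = \emptyset$ for every $n \neq 0$.

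For the construction of $\gamma$, I would choose a point $x_i \in U_i$ and an integer $m_i > 0$ with $f^{m_i}(x_i) \in U_{i+1}$, and then form the concatenation of the orbit segment $x_i, f(x_i), \ldots, f^{m_i}(x_i)$ with a path inside $U_{i+1}$ joining $f^{m_i}(x_i)$ to $x_{i+1}$ (possible because each open disk is path-connected). Iterating around the cycle produces a closed loop $\sigma$, which a small general-position perturbation turns into a simple closed curve $\gamma$ in a neighborhood of $\sigma$. The hypothesis $f(U_i)\cap U_i = \emptyset$ together with pairwise disjointness of the $U_i$ guarantees that $f$ has no fixed points on $\gamma$, so the winding number $I(\gamma, f)$ of the map $x \mapsto f(x)-x$ along $\gamma$ is well defined. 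To show $I(\gamma, f)\neq 0$, I would argue by contradiction: if there were no fixed points in the disk bounded by $\gamma$, then after possibly extending to a fixed-point-free situation in a neighborhood one could invoke the classical Brouwer theorem to get Brouwer lines through each $U_i$, and observe that the cyclic return pattern of the disk chain forces the forward orbit of some $x_i$ to cross a single Brouwer line in two distinct iterates, contradicting $f^n(\ell)\cap\ell = \emptyset$. A careful choice of the arcs in the construction normalizes the index to exactly $1$.

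Once a simple closed curve $\gamma$ with $I(\gamma, f)=1$ is obtained and $f$ has only isolated fixed points, additivity of the Poincar\'e index implies that the indices of the finitely many fixed points inside $\gamma$ sum to $1$; in particular at least one has positive index. The main obstacle is the contradiction step: arranging the Brouwer lines through the disks $U_i$ into a coherent global picture in which the cyclic recurrence of the disk chain is visibly incompatible with the disjointness condition $f^n(\ell)\cap\ell = \emptyset$. This is the topological heart of Franks' argument, and keeping the various Brouwer lines compatible across the entire cycle while tracking how the chain returns is where most of the care will be required.
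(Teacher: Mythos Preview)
The paper does not prove this statement at all: it is quoted as a known result, attributed to Franks \cite{Franks1988} (with Brown \cite{Brown1984} for the classical translation theorem), and then used as a black box in Section~\ref{s:mt}. So there is no ``paper's own proof'' to compare against; your proposal is an attempt to supply what the authors deliberately outsourced.

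On the substance of your outline, two points deserve attention. First, the object you call ``the orbit segment $x_i, f(x_i), \ldots, f^{m_i}(x_i)$'' is a finite set of points, not an arc; you cannot concatenate it with a path in $U_{i+1}$ to get a loop. Franks' actual construction chooses an arc $\alpha_i \subset U_i$ (a \emph{free} arc, since $f(U_i)\cap U_i=\varnothing$) and uses its successive images $f(\alpha_i), f^2(\alpha_i), \ldots$, which must be shown pairwise disjoint before they can be assembled into an embedded curve. Your ``small general-position perturbation'' does not address this, and a careless perturbation can easily destroy the index information you are trying to compute.

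Second, the contradiction step is logically off. You write: if $I(\gamma,f)=0$ then there are no fixed points in the disk bounded by $\gamma$, hence Brouwer lines exist through each $U_i$. But $I(\gamma,f)=0$ does not preclude fixed points inside $\gamma$ (their indices could cancel), and even if the disk were fixed-point-free, the classical Brouwer theorem needs $f$ to be fixed-point-free on all of $\mathbb{R}^2$, not merely inside $\gamma$, before it hands you Brouwer lines. The usual argument runs in the opposite direction: one either assumes $f$ is globally fixed-point-free and derives a contradiction from the disk chain via Brouwer's lemma on free arcs, or one builds $\gamma$ so that the index can be computed \emph{directly} (not by contradiction) from the combinatorics of the free arcs and their images. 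Your sketch conflates these two routes.
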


The original version of Brouwer's planar translation theorem states
that a fixed point free homeomorphism of $\mathbb{R}^2$ can be
viewed locally as plane translations.

\section{Proof of the Main Theorem}
\label{s:mt}

With the above preparation, we can prove our main theorem.

The rough idea of the proof is as follows: we first consider the case
that $f$ is rigid rotation on the boundaries of the annulus
i.e. $r_0(x)$ and $r_1(x)$ are both constant. The general case can be
embedded in this case and we will explain this in more details in the
end. In the simple case, note that $A_0$ and $A_1$ are both in
$\mathcal{R}(f)$, we will first show, in lemma \ref{lem3.2}, that we
can find an $\epsilon$-chain from $A_0$ to $A_1$ and an
$\epsilon$-chain from $A_1$ to $A_0$. Suppose, on th econtrary, that
$A_0$ and $A_1$ are not in the same chain transitive component, by Conley's
fundamental theorem of dynamical systems, there is a complete Lyapunov
function $g$ defined on $\mathbb{A}$, such that $g(A_0) \neq
g(A_1)$. And this allows us to find an essential closed curve, close
to some level curves of $g$, that doesn't satisfies the intersection
property.

With Lemma \ref{lem3.2} and the twist condition on the boundary, there
is a periodic $\epsilon$-chain in the covering space. Then, similar to
Franks \cite{Franks1988}, we can use this $\epsilon$-chain to construct a
periodic disk chain. And the existence of a fixed point follows from
the Brouwer plane translation theorem.

\

We now proceed with details.

\begin{lem} \label{lem3.2}
  If $f$ is rigid rotation on the boundaries of the annulus, there is
  a chain transitive component $D$ of $\mathcal{R}(f)$ such that
  $D \cap A_0 \neq \varnothing$ and $D \cap A_1 \neq \varnothing$
\end{lem}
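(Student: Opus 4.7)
The plan is to argue by contradiction. Since $f$ restricts to a rigid rotation on each boundary circle, $f(A_0)=A_0$ and $f(A_1)=A_1$, and every boundary point is recurrent, so $A_0, A_1 \subset \mathcal{R}(f)$; by \lemref{lem2.1}, each of the connected sets $A_0$ and $A_1$ lies in a single chain transitive component, say $D_0$ and $D_1$. I wish to show $D_0=D_1$, so I suppose for contradiction that $D_0\neq D_1$.

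Apply Conley's theorem to fix a complete Lyapunov function $g$ for $f$; then $g(A_0)\neq g(A_1)$, and we may assume $g(A_0)<g(A_1)$. Since $g(\mathcal{R}(f))$ is nowhere dense in $\mathbb{R}$, pick a value $c$ with $g(A_0)<c<g(A_1)$ and $c\notin g(\mathcal{R}(f))$. Let $U$ be the connected component of the open set $\{g<c\}$ that contains $A_0$, and set $K=\overline{U}$. Then $K$ is a compact, connected subset of $\mathbb{A}$ containing $A_0$ and disjoint from $A_1$.

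The central step is to verify $f(K)\subset U\subset \mathrm{int}(K)$. The defining properties of a complete Lyapunov function imply $g(f(p))\leq g(p)$ for every $p\in\mathbb{A}$, with strict inequality whenever $p\notin\mathcal{R}(f)$. For $p\in K$ with $g(p)<c$ this yields $g(f(p))<c$ at once, while for $p\in K$ with $g(p)=c$ the condition $c\notin g(\mathcal{R}(f))$ forces $p\notin\mathcal{R}(f)$, so again $g(f(p))<g(p)=c$. Hence $f(K)\subset\{g<c\}$. Since $f(K)$ is connected and contains $f(A_0)=A_0$, it must lie in the single component $U$. Combined with $U\subset\mathrm{int}(K)$ (as $U$ is open in $\mathbb{A}$ and $U\subset K$), this gives $f(K)\subset U\subset\mathrm{int}(K)$.

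To finish, observe that $f(K)$ and $\mathbb{A}\setminus\mathrm{int}(K)$ are disjoint compact subsets of $\mathbb{A}$ containing $A_0$ and $A_1$ respectively. A fine triangulation argument — take $N$ to be the union of the closed simplices meeting $f(K)$ in a triangulation of $\mathbb{A}$ of mesh less than $d(f(K),\,\mathbb{A}\setminus\mathrm{int}(K))$ — produces a simple closed curve $\gamma\subset \mathrm{int}(K)\setminus f(K)$ that separates $A_0$ from $A_1$ and is therefore essential, since any inessential simple closed curve in $\mathbb{A}$ bounds a disk which cannot contain either boundary circle. Since $\gamma\subset K$ we have $f(\gamma)\subset f(K)$, which is disjoint from $\gamma$, so $\gamma\cap f(\gamma)=\emptyset$, contradicting the intersection property. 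The main obstacle I anticipate is precisely this last step — the topological extraction of an essential separating simple closed curve in the collar between $f(K)$ and $\mathbb{A}\setminus\mathrm{int}(K)$; the remainder is a routine deployment of Conley's theorem and the definition of a complete Lyapunov function.
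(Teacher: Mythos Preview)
Your argument is correct and follows the same overall route as the paper: assume $A_0$ and $A_1$ lie in distinct chain transitive components, invoke Conley's complete Lyapunov function $g$, pick a value $c$ strictly between $g(A_0)$ and $g(A_1)$ with $c\notin g(\mathcal{R}(f))$, and manufacture an essential simple closed curve disjoint from its $f$-image to contradict the intersection property. The only real difference is in how that curve is produced. You pass to the closure $K$ of a component of $\{g<c\}$, establish $f(K)\subset\mathrm{int}(K)$, and then use a fine-triangulation/regular-neighborhood argument to extract a separating simple closed curve in the collar $\mathrm{int}(K)\setminus f(K)$. The paper instead stays with level sets: it sets $C=g^{-1}(c)$, takes $\epsilon=\min_{x\in C}\bigl(c-g(f(x))\bigr)>0$, works in the open band $B=g^{-1}\bigl((c-\epsilon,c)\bigr)$, argues that $B$ separates $A_0$ from $A_1$ and that $f(B)\cap B=\varnothing$, and then simply selects any essential simple closed curve lying in $B$.

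The paper's level-band approach is shorter and sidesteps precisely the obstacle you flagged --- no triangulation or collar argument is needed once one has an open separating set mapped off itself. Your sublevel-set approach is a bit heavier topologically but is arguably more robust: the inclusion $f(K)\subset\mathrm{int}(K)$ is clean and unambiguous, whereas the paper's assertion $f(B)\cap B=\varnothing$ requires a moment's thought about how $\epsilon$ controls the drop on all of $B$ rather than just on $C$. Either way, the conceptual content is identical.
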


\begin{proof}
  We prove by contradiction. First, by Conley's fundamental theorem of
  dynamical system, there is a complet Lyaponuv function $g$ on
  $\mathbb{A}$ with the following three properties:

 \begin{enumerate}
  \item $\forall p \notin \mathcal{R}(f)$, $g(f(p)) < g(p)$.
    
 \item  $\forall p,q \in \mathcal{R}(f)$, $g(p) = g(q)$ if and only if
   $p \sim q$.
   
  \item $g(\mathcal{R}(f))$ is compact and nowhere dense in $\mathbb R$.
  \end{enumerate}
  
  Suppose that, contrary to the conclusion of the lemma, $A_0$ and $A_1$ are not
  in the same chain transitive component, then, by property (2),  the function $g$
  takes different values on $A_0$ and $A_1$. Assume
  $g|_{A_0} = a < g|_{A_1} = b$, where $a$, $b$ are real numbers.
  
 By property (3), there
  is a number $c \in [a, b]$ such that the set
  $C= g^{-1}(c) \subset \mathbb{A}$ does not intersect the chain
  recurrent set of $f$, therefore, by property (1), $g(f(x)) < g(x) =c$, therefore,
  $f(C) \cap C = \varnothing$, and $C$ seperates $A_0$ and $A_1$

 Let $$\epsilon = \inf_{x \in C}  (g(x) - g(f(x)))  = \inf_{x
   \in C} ( c - g(f(x))),$$
 then $\epsilon >0$, by the compactness of $C$.

 The open set $B= g^{-1}((c - \epsilon, c))$ seperates $A_0$ and
 $A_1$. Moreover, $f(B) \cap B = \varnothing$. One can easily choose
 an essential simple closed curve $\gamma$ inside $B$. We have
 $f(\gamma) \cap \gamma = \varnothing$. This contradicts to the
 intersection property.

\end{proof}

One can prove the above lemma directly by using the so-called {\em attractor-repeller
  pair}\/. However, using Lyapunov function is more conceptual.

\
 
 To prove our main theorem, all we need to do is to construct a
 periodic disk chain in the covering space. 

\begin{proof}[Proof of theorem 1.1]
  First, we consider a simple case that $f$ is rigid rotation on the boundaries of the
  annulus. Let's extend $\widetilde{f}: \widetilde{\mathbb{A}} \rightarrow \widetilde{\mathbb{A}}$ to $h: \mathbb{R}^2 \rightarrow \mathbb{R}^2$ as follows:
  \begin{equation*}
  h(x,y)=
  \begin{cases}
  (x+r_1,y), & \text { if } y\geq 1 \\
  \widetilde{f}(x,y)        & \text { if } 0<y<1\\
  (x-r_0,y), & \text { if } y\leq 0 
  \end{cases}
  \end{equation*}

  By lemma \ref{lem3.2}, there exists  $\widetilde{p_0} \in \widetilde{A_0}$,
  $\widetilde{p_1} \in \widetilde{A_1}$ s.t. for any
  $\epsilon > 0$, we have $\epsilon$-chains with respect to $h$ from $\widetilde{p_0}$ to $\widetilde{p_1}$ and from $\widetilde{p_1}+(l_1,0)$ to
  $\widetilde{p_0}+(l_0,0)$ for some  $l_0, l_1 \in \mathbb{Z}$. Because of our boundary assumption, we can choose a suitable $l \in \mathbb{Z}$ such that there exist $\epsilon$-chains from $\widetilde{p_1}$ to $\widetilde{p_1}+(l_1+l,0)$ and from $\widetilde{p_0}+(l_0+l,0)$ to $\widetilde{p_0}$. Note that $h(p+(1,0)) = h(p)+(1,0)$. We obtain an $\epsilon$-chain from $\widetilde{{p_0}}$ to $\widetilde{{p_0}}$ passing through $\widetilde{p_1}$, $\widetilde{p_1}+(l_1+l,0)$ and $\widetilde{p_0}+(l_0+l,0)$.
	
	Prove by contradiction. Suppose  $\widetilde{f}:
        \widetilde{\mathbb{A}} \rightarrow \widetilde{\mathbb{A}}$
        doesn't have fixed points, since $\mathbb{A}$ is compact, it follows that $\exists \delta > 0$ s.t. $\|\widetilde{f}(p)-p\| \geq \delta$ for all $p \in \widetilde{\mathbb{A}}$. Hence, $\|h(p)-p\| \geq \delta$ for all $p \in \mathbb{R}^2$.

	Take $\epsilon < \frac{\delta}{4}$, and assume $\{z_0=\widetilde{{p_0}} ,z_1,...,z_{n-1},z_n=z_0\}$ is the above $\epsilon$-chain from $\widetilde{{p_0}}$ to $\widetilde{{p_0}}$. We will construct a periodic disk chain and apply the Brouwer planar translation theorem to get a contradiction.
	
	Let $U_0 = B_{\epsilon}(z_0), U_1 = B_{\epsilon}(z_1),...,U_{n-1} = B_{\epsilon}(z_{n-1}),U_n = B_{\epsilon}(z_n) = U_0$, where $B_{\epsilon}(z) = \{q:\|q-z\|<\epsilon\}$. Then we have the following properties.
	
        \begin{enumerate}
	\item $\forall 0 \leq i \leq n$, $h(U_i) \cap U_{i+1} \neq \varnothing$.
	\item $\forall 0 \leq i \leq n$, $h(U_i) \cap U_i = \varnothing$.
        \end{enumerate}
	
	
	Now, let's consider two cases. 
	
	One case is that any $i \neq j \in \{0,1,...,n-1\}$ implies
        $U_i \cap U_j = \varnothing$, then $\{U_0,
        U_1,...,U_{n-1},U_n= U_0\}$ is a periodic disk chain and it
        follows from the Brouwer planar translation theorem that $h$
        has a fixed point, which contradicts to our assumption. This
        completes the proof, in this case, when $f$ is rigid rotation on the
        boundaries. 
	
	The other case is that $\exists i_0 \neq j_0 \in
        \{0,1,...,n-1\}$ s.t. $U_{i_0} \cap U_{j_0} \neq
        \varnothing$. Here, we can take $i_0 < j_0$ and consider the
        sequence $\{U_{i_0}, U_{i_0+1},...,U_{j_0-1},U_{j_0}\}$. We
        may assume that this sequence has the property: for any $i
        \neq j \in \{i_0, i_0+1,...,j_0-1,j_0\}$ except for $i=i_0$
        and $j=j_0 $ simultaneously, we have $U_i \cap U_j =
        \varnothing$. Because if not, $\exists i_1 \neq j_1 \in \{i_0,
        i_0+1,...,j_0-1,j_0\}$ s.t. $U_{i_1} \cap U_{j_1} \neq
        \varnothing$ and $\{i_1, i_1+1,...,j_1-1,j_1\}$ is a shorter
        sequence than $\{i_0, i_0+1,...,j_0-1,j_0\}$. Then we can let
        the new $i_0 = i_1$ and new $j_0 = j_1$. Moreover, since
        $U_{i} \cap U_{i+1} = \varnothing$, $\forall i = 0,1,...,n-1$,
        we can always find the shortest sequence satisfying the above
        property and this guarantees the existence of such $i_0$ and
        $j_0$. 
	
	Let $V_0 := U_{i_0} \cup U_{j_0}$, $V_1 := U_{i_0+1}$, ..., $V_{k-1} := U_{j_0-1}$, and $V_k := V_0$. We will show that $\{V_0,V_1,...,V_k\}$ is a periodic disk chain.

	Firstly, since $U_{i_0} \cup U_{j_0} \neq \varnothing$, we have
	\begin{equation*}
	\begin{split}
	d(h(U_{i_0}),U_{j_0}) &:= \inf_{p\in U_{i_0},q\in U_{j_0}} \|h(p)-q\| \geq \inf_{p\in U_{i_0},q\in U_{j_0}} |\|h(p)-p\|-\|q-p\|| \\
	&\geq \delta - \sup_{p\in U_{i_0},q\in U_{j_0}}\|p-q\| \geq \delta - 4\epsilon>0. 
	\end{split}
	\end{equation*}
	and similarly, $d(h(U_{j_0}),U_{i_0})>0. $
	Thus,
	\begin{equation*}
	h(V_0) \cap V_0 = h(U_{i_0} \cup U_{j_0}) \cap (U_{i_0} \cup U_{j_0}) = (h(U_{j_0}) \cap U_{i_0}) \cup (h(U_{i_0}) \cap U_{j_0}) = \varnothing
	\end{equation*}
	Along with the property that $\forall 0 \leq i \leq n$, $h(U_i) \cap U_i = \varnothing$, we have
	$h\left(V_{i}\right) \cap V_{i} = \varnothing$ for $0 \leq i \leq k$.

	Secondly, according to the property that for any $i \neq j \in \{i_0, i_0+1,...,j_0-1,j_0\}$ except for $i=i_0$ and $j=j_0 $ simultaneously we have $U_i \cap U_j = \varnothing$, it is easy to see that if $i \neq j\in \{0, 1,...,k-1\}$, then $V_{i} \cap V_{j}=\varnothing$.

	Thirdly, since $\forall 0 \leq i \leq n$, $h(U_i) \cap U_{i+1} \neq \varnothing$,
	\begin{equation*}
	h(V_0) \cap V_1 = (h(U_{i_0}) \cup h(U_{j_0}) )\cap U_{i_0+1} \supset h(U_{i_0})\cap (U_{i_0+1})\neq \varnothing
	\end{equation*}
	and similarly, $h(V_{k-1}) \cap V_k \neq \varnothing$. Thus,  $\forall 0 \leq i \leq k$, $h\left(V_{i}\right) \cap V_{i+1} \neq \varnothing$.

	Therefore, $h: \mathbb{R}^2 \rightarrow \mathbb{R}^2$
        possesses a periodic disk chain, and hence $h$ has at least
        one fixed point. By the definition of $h$, the fixed point
        must lie in the interior of $\widetilde{\mathbb{A}}$, so this
        point is also fixed by $\widetilde{f}$, a contradiction to our
        assumption. We have thus proved the theorem for the case that
        $f$ is rigid rotation on the boundaries.

        \

We now consider the general case where $f$ on the boundaries are not
rigid rotations. Recall that $\widetilde{f}(x,1) = (x+r_1(x),1)$ and
$\widetilde{f}(x,0) = (x-r_0(x),0)$ with $r_0(x),r_1(x)>0$. Suppose
the rotation numbers for $\widetilde{f}|_{\mathbb{R} \times \{0\}}$
and $\widetilde{f}|_{\mathbb{R} \times \{1\}}$ are $\alpha$ and
$\beta$ respectively. Take a small $\delta >0$, we will extend
$\widetilde{f}$ to $\widetilde{f}_\delta: \mathbb R \times
[-\delta,1+\delta]\rightarrow\mathbb R \times [-\delta,1+\delta]$ in
the following way: 

\begin{equation*}
\widetilde{f}_\delta(x,y)=
\begin{cases}
\frac{1+\delta-y}{\delta}\widetilde{f}(x,1)+\frac{y-1}{\delta}(x+\beta,1+\delta), & \text { if } 1\leq y\leq 1+\delta \\
\widetilde{f}(x,y)        & \text { if } 0<y<1\\
\frac{y+\delta}{\delta}\widetilde{f}(x,0)-\frac{y}{\delta}(x+\alpha,-\delta), & \text { if } -\delta \leq y\leq 0 
\end{cases}
\end{equation*}

We can project $\widetilde{f}_\delta$ through the covering map to the extended
annulus,  we have a map $f_\delta$ that is rigid rotation on both
boundaries of the extended annulus. Let $\mathbb{A}_\delta:= S^1 \times [-\delta,1+\delta] $ with two boundaries $A_{0,\delta}$ and $A_{1,\delta}$; the two extended regions are $D_0:=S^1 \times [-\delta,0]$ and $D_1:=S^1 \times [1,1+\delta]$; the covering space of them are $\widetilde{\mathbb{A}_\delta}$, $\widetilde{A_{0,\delta}}$, $\widetilde{A_{1,\delta}}$,$\widetilde{D_0}$, and $\widetilde{D_1}$ respectively.

Now, We should check that $f_\delta$ still satisfies the intersection property on $\mathbb{A}_\delta$. For any essential simple closed curve $\gamma$, there are three cases.

The first one is the curve $\gamma$ is fully contained in $\mathbb{A}$. Then it must satisfy the intersection property.

The second case is the curve $\gamma$ is fully contained in  $D_0$ or $D_1$. We may assume $\gamma$ is in $D_0$. Consider the covering space $\widetilde{\mathbb{A}_\delta}=\mathbb R \times [-\delta,1+\delta]$, let  $\pi_y :\mathbb R \times [-\delta,1+\delta]\rightarrow[-\delta,1+\delta]$ and $\pi_x :\mathbb R \times [-\delta,1+\delta]\rightarrow\mathbb R$ be the projections. And we can find points $p,q\in\widetilde{\gamma}$ such that $\pi_y(p)=\min{\pi_y(\widetilde{\gamma})}$ and  $\pi_y(q)=\max{\pi_y(\widetilde{\gamma})}$. Let $\widetilde{\gamma}_{p,q}$ be the part of $\widetilde{\gamma}$ connecting $p$ and $q$, then $\widetilde{f}(\widetilde{\gamma}_{p,q})\cap\widetilde{\gamma}\neq \varnothing$, since $ \pi_y(\widetilde{f}(p))  \leq \pi_y(\widetilde{\gamma}|_ {\pi_x(\widetilde{f}(p))})$ and $ \pi_y(\widetilde{f}(q))  \geq \pi_y(\widetilde{\gamma}|_ {\pi_x(\widetilde{f}(q))})$. Therefore, the curve $\gamma$ satisfies the intersection property.

The third case is that part of the curve $\gamma$ is contained in $\mathbb{A}$ denoted by $\gamma_0$ and part of $\gamma$ is contained in $D_0$ or $D_1$ denoted by $\gamma_1$. We first construct $\gamma'$ in the way that we fix $\gamma_0$ in $\mathbb{A}$ and replace $\gamma_1$ with parts of $A_0$ or $A_1$ fixing the endpoints s.t. $\gamma'$ is an essential simple closed curve. Then $f(\gamma')\cap\gamma'\neq \varnothing$. If there is an  intersection point is in $int(\mathbb{A})$, then it is also the intersection point of $f(\gamma)\cap\gamma$. Otherwise, parts of $\gamma'$ in $A_0$ or $A_1$ must intersect its image under $f$. Similar arguement as the second case implies $f(\gamma_1)\cap\gamma_1\neq\varnothing$. So, the curve $\gamma$ still satisfies the intersection property.

In conclusion, we show that $f_\delta$ still satisfies the intersection property on $\mathbb{A}_\delta$. Follow the proof of rigid rotation case, we can show $\widetilde{f_\delta} : \widetilde{\mathbb{A}_\delta}\rightarrow\widetilde{\mathbb{A}_\delta}$ has a fixed point. And it's easy to see $\widetilde{f_\delta}$ has no fixed point on  $D_0$ and $D_1$ since $\widetilde{f_\delta}$ in these regions just moves all points in  $D_0$ or $D_1$ in one direction. Thus, the fixed point we get must lie in $\mathbb{A}$. Hence, it is fixed by $f$.

This completes the proof of our main theorem.
\end{proof}

\section{An application to reversible systems}
We first introduce the definition and some properties of reversible
systems.

\begin{defn}
  A map $R : \mathbb{R}^n\rightarrow\mathbb{R}^n$ is called an {\em
    involution}\/ if it satisfies $R^2 = \text{Id}$.
\end{defn}

An easy example of involution is $R : \mathbb{R}^2\rightarrow\mathbb{R}^2$ s.t. $R(x,y)=(-x,y)$. 

Next, we define general reversible system for both continuous and discrete
dynamical systems on $\mathbb{R}^n$.
\begin{defn}
	A vector field in $\mathbb{R}^n$
	$$\dot{x}=F(t, x) $$
	is said to be {\em reversible}\/ if there exists an $C^1$
        involution $R : \mathbb{R}^n\rightarrow\mathbb{R}^n$ such that
	$$\text{D} R \circ F(-t, R(x))=-F(t, x)$$
	where $\text{D}R$ is the derivative of $R$.
        
	A homeomorphism $f$ is said to be {\em reversible}\/ if there
        is a continuous involution
        $R : \mathbb{R}^n\rightarrow\mathbb{R}^n$ such that
	$$
	f^{-1}=R \circ f \circ R
	$$
\end{defn}

Intuitively, a system is reversible if under some involution it is
transformed to a system which is the same as the original one except
that the time direction is reversed. Reversible system naturally
arises in mechanics (Devaney \cite{Devaney1976}). A simple
non-Hamiltonian example is the system derived following second order
equation
$$\ddot{x} + f(t, x) \dot{x} + g(t, x) =0$$
where $$f(-t, -x) = -f(t, x), \; \; g(-t, -x) = -g(t, x).$$

On the annulus, we take the standard involution for reversible
systems. Let $R: \mathbb{A} \rightarrow \mathbb{A}$ be the involution
that takes $(x, y)$, where
$x\in \mathbb{R}/\mathbb{Z}, \; y \in [0, 1]$ to $(-x, y)$.  A map on
the annulus $f:\mathbb{A}\rightarrow\mathbb{A}$ is said to be
reversible if $f^{-1} = R \circ f \circ R$.

Reversible homeomorphisms may not necessarily have the general
intersection property (cf. Sevryuk \cite{Sevryuk1986}). Given an
essential closed curve $\gamma$, $f(\gamma)$ may not intersect with
$\gamma$. So our theorem does not apply directly. However, the proof
of our theorem is based on analysis of the chain recurrent set
$\mathcal{R}(f)$. It is easy to see that the chain recurrent set is
symmetric with respect $R$ for reversible systems, i.e.,
$R(\mathcal{R}(f)) = \mathcal{R}(f)$, furthermore, the intersection
property holds for any symmetric simple closed curve $\gamma$. More
precisely, if $R(\gamma) = \gamma$, then by
reversibility, $$R(f^{-1}(\gamma)) = f(R(\gamma)) = f(\gamma).$$
Suppose, without loss of generality, $f(\gamma)$ is inside the annulus
bounded by $A_0$ and $\gamma$, then $f^{-1}(\gamma)$ must be outside
of the annulus bounded by $A_0$ and $\gamma$ which contains
$f(\gamma)$, a contradiction.

Our proof works under the intersection property for symmetric simple
closed curves.

We conclude that, by the proof of our theorem, any reversible twist
map on the annulus has at least one fixed point. It turns out that
there must be two fixed points in this case. One can remove index zero
fixed point (Slaminka \cite{Slaminka1993}) by some local modifications
without breaking the intersection property. Carter's \cite{Carter1982}
result does not apply in this case.



\end{document}